\newcommand{\labbel}[1]{\label{#1} [[{\bf #1}]]}  
\renewcommand{\labbel}{\label}
\newcommand{\mathbff}{}
\newcommand{\ranglee}{\big \rangle }
\newcommand{\langlee}{\big \langle }
\newtheorem{theorem}{Theorem}
\newtheorem{proposition}[theorem]{Proposition} 
\newtheorem{corollary}[theorem]{Corollary}
\newtheorem*{claim*}{Claim}
\newtheorem*{theorem*}{Theorem}
\newtheorem*{proposition*}{Proposition}
\newtheorem*{corollary*}{Corollary}
\newtheorem*{lemma*}{Lemma}
\newtheorem*{scholion*}{Scholion}
\theoremstyle{definition}
\theoremstyle{remark}
\newtheorem{remark}[theorem]{Remark}
\newtheorem*{remark*}{Remark}
\newtheorem*{remarks*}{Remarks}
\newtheorem*{observation*}{Observation}
\numberwithin{equation}{section}
\begin{document}

\title{Non-generators in complete lattices and semilattices}

\author{Paolo Lipparini} 

\email{lipparin@axp.mat.uniroma2.it}

\urladdr{http://www.mat.uniroma2.it/~lipparin}

\address{Dipartimento di Matematica\\Viale della  Ricerca
Genereticolata\\Universit\`a di Roma ``Tor Vergata'' 
\\I-00133 ROME ITALY\\
ORCiD: 0000-0003-3747-6611}

\subjclass{Primary  06B23; Secondary 06A12; 08A65}

\keywords{non-generator; complete lattice; complete sublattice; complete semilattice}

\date{\today}

\thanks{
Work performed under the auspices of G.N.S.A.G.A. Work 
partially supported by PRIN 2012 ``Logica, Modelli e Insiemi''.
The author acknowledges the MIUR Department Project awarded to the
Department of Mathematics, University of Rome Tor Vergata, CUP
E83C18000100006.}

\begin{abstract}
As well-known, in a finitary algebraic structure 
 the set $\Gamma$ of all the non-generators is the intersection
 of all the maximal proper
substructures. In particular, $\Gamma$ 
is a substructure.

We show that the corresponding statements hold
for complete semilattices but fail for complete lattices,
when as  the notion of   substructure we take complete
subsemilattices and  complete sublattices, respectively.
\end{abstract}

\maketitle  
    
We shall consider complete 
lattices and complete semilattices,
whose substructures are taken to be complete sublattices and complete 
subsemilattices, respectively.
In a \emph{complete sublattice}
both meets and joins, possibly infinitary, 
should agree with meets and joins of the original structure.
In a \emph{complete subsemilattice}
only meets are required to 
 agree with  the original meets.
We refer to \cite{G} for basics about lattices and semilattices. 

To simplify  notation, we shall not distinguish 
between a structure and its underlying set.
This little abuse of notation will never produce ambiguity.
If, say, $\mathbff L$ is a complete lattice
and $X \subseteq L$, then 
$\langlee X \ranglee$   
denotes the complete sublattice \emph{generated by} 
$X$, that is, the intersection
of all the complete sublattices of $\mathbff L$  
containing $X$.
Of course, when dealing with complete semilattices,
$\langlee X \ranglee$ generally turns out to be a different
set; it will always be clear from the context whether we are 
working in the setting of lattices or of semilattices.
The following definitions apply to both settings.

An element $a \in L$ is a \emph{non-generator} if,
 for every $X \subseteq L$, it happens that 
 $\langlee X, a\ranglee = L$ implies
 $\langlee X \ranglee = L$.
Otherwise, $a$ is called a \emph{relative generator}.
Thus $a$ is a relative generator
if there is some  $X \subseteq L$ such that 
 $\langlee X, a\ranglee = L$ but not
 $\langlee X \ranglee = L$.
Here, as usual, 
 $\langlee X, a\ranglee $ is an abbreviation for
 $\langlee X \cup \{ a \}  \ranglee $.
An element
$a$ is \emph{indispensable} if $a$ belongs to every generating  set,
that is, $\langlee X \ranglee = L$ implies $a \in X$. 
In particular, every indispensable element
is a relative generator (take $X=L \setminus \{ a \} $). 
Notice that some element $a$ is indispensable
if and only if  $L \setminus \{ a \}  $
is a substructure.  

Of course the above notions can be considered---and originally
 have been considered---in various distinct algebraic settings.
See \cite{BS,Jz,KV} for further details and references.
It is immediate to see that a non-generator belongs
to the intersection of all the proper maximal 
substructures, if any.
The converse holds for algebraic structures 
whose operations are finitary, but not necessarily in the infinitary case
\cite{H}. We show that a counterexample can be realized already 
in the classical and well-studied setting of complete lattices.
On the other hand,  complete semilattices share the good behavior
of finitary algebraic structures, as far as non-generators
are considered. Let us mention that we deal with completeness
just for notational simplicity; in both cases asking just for the
existence of countable meets or joins is enough.
A few further variations shall be presented in Remark \ref{card} below.  

The arguments presented in this note are substantially different from
the proof in \cite{H}, which is indirect and relies on \cite{Gr}.
In order to make a comparison, we briefly sketch the arguments from
\cite{H}; this shall not be needed in the sequel. 

If $A$ is an algebraic structure, let $S(A)$
be the closure system associated to  the set of substructures of $A$,
 $\Gamma(A)$ be the set of the non-generators
of $A$ and $\Phi(A)$ be the intersection of 
all the proper maximal substructures of $A$,
setting $\Phi(A)=A$ if $A$ has no proper maximal substructure.
 Thus 
 $\Gamma(A) = \Phi(A)$ in any finitary structure,
$\Phi(A) $ is a substructure of $ A$
and $\Gamma(A) \subseteq  \Phi(A)$ always.

In \cite{H} a closure system $S$ is constructed such that
if $A$ is a structure and $S=S(A)$, then $\Gamma(A)$  is not a substructure
of $A$, and moreover 
$ \langle \Gamma(A) \rangle $ is strictly
contained in $\Phi(A)$.
By \cite[Theorem 1]{Gr}, for every closure system $S$, 
there is indeed some infinitary algebraic structure 
$A$ such that $S=S(A)$. It follows that
$\Gamma(A) \subsetneq \Phi(A)$ 
might actually happen
for infinitary algebraic structures. 
To the best of our knowledge,
no other example of $\Gamma(A) \subsetneq \Phi(A)$
has appeared before in the literature.
Further examples will appear in \cite{Lp}.

The structures considered
in \cite{Gr} have infinitely many operations
and each operation is  everywhere a projection, except
when applied to a single specific sequence. 
On the other hand, here we can
equivalently  work with
countably complete lattices, hence our counterexamples 
can be taken to be structures
with just two operations. Actually, as we shall show
in Corollary \ref{unasola}, our examples can be easily modified
in order to work with just one operation
depending on countably many arguments.

Conventionally, the intersection of an empty
family of substructures  of some structure $\mathbff L$
is taken to be $\mathbff L$  itself. 
In the standard definition of a complete semilattice
(lattice) the empty subset is required to have a meet (and a join);
in other words, complete semilattices (lattices) are required
to have a maximum (and a minimum). 
 We adopt the above convention; in particular, 
a complete subsemilattice of some complete semilattice
shares the maximum with the parent structure,
and similarly for lattices. In any case, 
in all the examples below, maxima and minima are non-generators
even under the alternative convention under which
the meet or join of the empty subset need not exist
or need not be preserved.
It follows that our results hold irrespective
of the convention about the meet and join of the empty set.
See Remark \ref{pf}(b).

\begin{proposition} \labbel{seml}
In every complete
semilattice the set $\Gamma$  of non-generators is a 
complete subsemilattice and $\Gamma$  is the 
intersection of all the  maximal proper 
complete subsemilattices.
 \end{proposition}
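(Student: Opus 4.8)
The plan is to reduce the whole statement to the single identity $\Gamma=\Phi$, where $\Phi$ denotes the intersection of all maximal proper complete subsemilattices (with the usual convention that $\Phi=L$ if there are none). One inclusion, $\Gamma\subseteq\Phi$, is the general and immediate fact already noted in the introduction: if $a$ is a non-generator and $M$ is a maximal proper complete subsemilattice, then $a\notin M$ would give $\langlee M,a\ranglee=L$ by maximality, hence $\langlee M\ranglee=M=L$, a contradiction. Moreover $\Phi$, being an intersection of complete subsemilattices (or $L$ by convention), is itself a complete subsemilattice; so once we know $\Gamma=\Phi$ we are done. Hence it suffices to prove the reverse inclusion $\Phi\subseteq\Gamma$, i.e.\ that every relative generator lies outside some maximal proper complete subsemilattice.

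The tool I would use is the explicit description of generated complete subsemilattices: for $X\subseteq L$ one has $\langlee X\ranglee=\{\,\bigwedge Y : Y\subseteq X\,\}$. Indeed the right-hand side contains $X$ as well as the top $\bigwedge\emptyset$, is contained in every complete subsemilattice containing $X$, and is closed under arbitrary meets because $\bigwedge_{k}\bigwedge Y_k=\bigwedge\bigl(\bigcup_k Y_k\bigr)$. An immediate consequence is $\langlee X,a\ranglee=\langlee X\ranglee\cup\{\,a\wedge m : m\in\langlee X\ranglee\,\}$; note that the top element is what lets $a=a\wedge\bigwedge\emptyset$ land in the second set.

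Now let $a$ be a relative generator; fix $X$ with $\langlee X,a\ranglee=L$ but $N:=\langlee X\ranglee\neq L$, and observe $a\notin N$ (otherwise $L=\langlee X,a\ranglee\subseteq N$). By the displayed formula every element of $L\setminus N$ lies in $\{\,a\wedge m:m\in N\,\}$, hence is $\le a$; since also $a\in L\setminus N$, the element $a$ is the greatest element of $L\setminus N$, so every element strictly above $a$ lies in $N$. I claim $L\setminus\{a\}$ is then a complete subsemilattice: it contains the top (which lies in $N$, hence differs from $a$), and if some $S\subseteq L\setminus\{a\}$ had $\bigwedge S=a$, then every $s\in S$ would satisfy $s>a$, hence $s\in N$, hence $a=\bigwedge S\in N$, a contradiction. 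Being a proper complete subsemilattice that omits only the single element $a$, it is a maximal proper complete subsemilattice avoiding $a$; thus $a\notin\Phi$. This yields $\Phi\subseteq\Gamma$, completing the proof.

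The obstacle I expect is conceptual rather than computational. The naive strategy — extend a proper complete subsemilattice avoiding $a$ to a maximal such one by Zorn's Lemma — breaks down, because a directed union of complete subsemilattices need not be closed under arbitrary meets (this is precisely the phenomenon that, in the lattice setting, produces the counterexample discussed later in the paper). The point of the argument above is that for semilattices no Zorn argument is needed at all: the explicit form of $\langlee X\ranglee$ forces a relative generator to be the top of $L\setminus\langlee X\ranglee$, and that alone makes $L\setminus\{a\}$ a (maximal) complete subsemilattice. Verifying the description of $\langlee X\ranglee$ and the meet-closure of $L\setminus\{a\}$ are then routine.
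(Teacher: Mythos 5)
Your proposal is correct and is essentially the paper's argument: both rest on the same key fact that a relative generator $a$ must have $L\setminus\{a\}$ a complete subsemilattice (equivalently, every meet-reducible element is a non-generator), so that every element is either indispensable or a non-generator and the maximal proper complete subsemilattices are exactly the sets $L\setminus\{b\}$ with $b$ indispensable. The paper organizes this as the characterization ``non-generator iff meet reducible,'' whereas you derive it from the normal form $\langle X\rangle=\{\bigwedge Y: Y\subseteq X\}$, but the underlying mechanism --- adjoining $a$ to a generating set only contributes elements $\le a$ --- is the same.
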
 

 \begin{proof} 
We first prove that an element $a$ of  some complete
semilattice $\mathbff S$ is  a non-generator
if and only if $a$  is \emph{meet reducible}
(here meets are always allowed to be infinitary), 
that is, there is a subset $Y \subseteq S$
such that $a \notin Y$   and $a = \bigwedge Y$.

Indeed, suppose that $a$ is meet reducible, as witnessed 
by $Y$.  If $X \subseteq S$ and $\langlee X, a \ranglee = S$,
then  $y \in \langlee X, a \ranglee $, for every $y \in Y$.
But then  $y \in \langlee X \ranglee $, since $y>a$, hence 
$a$ cannot contribute to the generation of $y$. Then 
also  $a \in \langlee X \ranglee $, since 
 $a = \bigwedge Y$ and $ \langlee X \ranglee $
is a complete subsemilattice.
Hence  $  \langlee X\ranglee= \langlee X, a \ranglee = S $.
Thus $a$ is a non-generator. 

On the other hand, if
$a$ is meet irreducible,
then $S \setminus \{ a \} $ is a complete
subsemilattice 
of $\mathbff S$, hence $a$ fails to be a non-generator,
actually, $a$ is indispensable.

We have proved a bit more:
an element of some complete semilattice is either
indispensable, or a non-generator (the finitary case appears in \cite{K}).
Thus the set $\Gamma$ of the non-generators is the intersection
of the complete subsemilattices of the form
$S \setminus \{ b \} $, with $b$ indispensable.  
No maximal proper complete subsemilattice of a different kind
exists, since all the non-generators belong to every
maximal proper complete subsemilattice and if some element $b$
fails to be a non-generator, then
$S \setminus \{ b \} $ is a complete
subsemilattice 
of $\mathbff S$.
Hence $\Gamma$ is the intersection
of the maximal proper complete subsemilattices of $\mathbff L$.
In particular, $\Gamma$  is a complete subsemilattice.
\end{proof} 

See Remark \ref{card} below for some variations
on Proposition \ref{seml}.  

Besides semilattices, there are many situations 
in which $\Gamma(A) = \Phi(A)$
 even for infinitary algebras. Indeed, the proof of Proposition \ref{seml}
shows that this is the case for every structure in which 
every element is either indispensable or a non-generator.
As another example, $\Gamma(A) =  \Phi(A)$
holds for finite (= having finite domain) structures, 
since in this case any substructure can be extended
to a maximal one, and then the classical argument applies.
We are not aware of any systematic study
of infinitary structures for which $\Gamma(A) = \Phi(A)$ holds.
In \cite{Lp} we show that every structure with
at least one infinitary operation  can be embedded into some structure $B$ 
such that $\Gamma(B)$  is not a substructure of $B$,
but can also  be embedded into some structure $C$ 
such that $\Gamma(C) = \Phi(C)$.
Moreover, in \cite{Lp} we present non-trivial examples of structures $D$ 
such that $\Gamma(D^+)$ is a substructure of $D^+$,
for every expansion $D^+$ of $D$.

We now show that lattices behave in a way different from
semilattices. The next proposition is rather simple and shows that 
in a distributive complete lattice the intersection of all the maximal 
proper complete sublattices 
might be strictly larger than the set of  the non-generators.

A complete linearly ordered set
is considered as a complete lattice
endowed with the operations of 
  $\sup$ and  $\inf$. 
 In particular, this applies to 
the set $\{ 0, 1\}$ with the standard
order. 
The reader familiar with ordinals
will recognize that the linearly ordered sets 
$\mathbff K$ in, respectively, Proposition \ref{latprop}
and Theorem \ref{lat} below  
are isomorphic to the ordinals $ \omega+1$
and $ \omega^2+1$.    
However, we shall  need no aspect of the
theory of ordinals, hence we give  explicit definitions
from scratch.

\begin{proposition} \labbel{latprop} 
 Let $\mathbff K$  be the complete lattice 
obtained from the linearly ordered set $\mathbb N$ 
by adding a top element 
$ \omega$. Let $\mathbff L$ be the lattice product
$\mathbff K \times \{ 0,1 \} $. 

In the complete lattice $\mathbff L$ 
the element $( \omega , 0) $ is a relative generator,
but $( \omega , 0) $ belongs to all the
 maximal proper complete sublattices of $\mathbff L$.   
The set $\Gamma$ of all the non-generators is a 
complete sublattice of $\mathbff L$.  
\end{proposition}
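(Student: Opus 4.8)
The plan is to compute the non-generator set $\Gamma$ outright and, separately, to understand which complete sublattices omit $( \omega ,0)$. I write elements of $\mathbff L$ as pairs $(m,i)$, $m \in \mathbff K = \mathbb N \cup \{ \omega \} $, $i \in \{ 0,1 \} $; meets and joins in $\mathbff L$ are computed coordinatewise, every nonempty subset of $\mathbff K$ has a least element (so finite meets in $\mathbff K$ never leave a finite set), while the supremum in $\mathbff K$ of an infinite subset of $\mathbb N$ is $ \omega $. Any complete sublattice $M$ contains $(0,0)$ and $( \omega ,1)$ and has the form $(A \times \{ 0 \} ) \cup (B \times \{ 1 \} )$ with $0 \in A$, $ \omega \in B$, where $A$ and $B$ are closed under meets and joins computed in $\mathbff K$, and $\min(a,b) \in A$, $\max(a,b) \in B$ for all $a \in A$, $b \in B$.

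First I would settle everything about $( \omega ,0)$ and $\Gamma$ that is a direct computation. Put $N := \{ (0,0) \} \cup (\mathbff K \times \{ 1 \} )$: this is a proper complete sublattice with $( \omega ,0) \notin N$, and since $( \omega ,0) \wedge (m,1)=(m,0)$ for every $m$, $\langlee N,( \omega ,0) \ranglee$ contains all of $\mathbff K \times \{ 0 \} $ and so equals $\mathbff L$; taking $X=N$ this shows $( \omega ,0)$ is a relative generator. Next I would show every element other than $(0,0)$ and $( \omega ,1)$ is a relative generator: $\mathbff L \setminus \{ (0,1) \} $ is a complete sublattice, so $(0,1)$ is even indispensable; and for $n \geq 1$ the set $N_n := \mathbff L \setminus \{ (n,0),(n,1) \} $ is a complete sublattice — no coordinatewise meet or join of members of $N_n$ has first coordinate $n$, by well-foundedness of $\mathbff K$ on $[n,\omega]$ and finiteness of $\{ 0, \dots ,n-1 \} $ — while $\langlee N_n,(n,0) \ranglee = \mathbff L$ since $(n,0) \vee (0,1)=(n,1)$, and $\langlee N_n,(n,1) \ranglee = \mathbff L$ since $(n,1) \wedge (n+1,0)=(n,0)$. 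As $(0,0)$ and $( \omega ,1)$ are the bottom and the top, they are non-generators, so $\Gamma = \{ (0,0),( \omega ,1) \} $, the least complete sublattice of $\mathbff L$; in particular $\Gamma$ is a complete sublattice.

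For the remaining claim I would prove that no complete sublattice $M$ with $( \omega ,0) \notin M$ is maximal. Writing $M=(A \times \{ 0 \} ) \cup (B \times \{ 1 \} )$: since $A$ is closed under suprema in $\mathbff K$ and $ \omega \notin A$, the set $A$ is finite; let $n=\max A$. If $B=\mathbff K$, then closure of $M$ under $(a,0) \wedge (m,1)=(\min(a,m),0)$ forces $A=\{ 0, \dots ,n \} $, so $M = (\{ 0, \dots ,n \} \times \{ 0 \} ) \cup (\mathbff K \times \{ 1 \} )$, which is a proper complete sublattice of the complete sublattice $(\{ 0, \dots ,n+1 \} \times \{ 0 \} ) \cup (\mathbff K \times \{ 1 \} )$, and hence not maximal. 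If $B \neq \mathbff K$, fix $c \in \mathbff K \setminus B$; then $\langlee M,( \omega ,0) \ranglee$ strictly contains $M$, and I claim it omits $(c,1)$, hence is a proper complete sublattice lying strictly between $M$ and $\mathbff L$. The claim reduces to the observation that the subsets of $\mathbff L$ of the form $(A' \times \{ 0 \} ) \cup (B' \times \{ 1 \} )$ with $B' \subseteq B$ and $A' \subseteq A \cup B \cup \{ \omega \} $ form a family closed under all meets and joins of $\mathbff L$ (using that $A,B$ are closed under meets and joins in $\mathbff K$, that $\max(a,b) \in B$ whenever $a \in A$, $b \in B$, and well-foundedness) which contains $M \cup \{ ( \omega ,0) \} $; therefore every member of $\langlee M,( \omega ,0) \ranglee$ of second coordinate $1$ has first coordinate in $B$.

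The step I expect to take the most care is the purely bookkeeping verification that the auxiliary sets $N_n$ and $(A' \times \{ 0 \} ) \cup (B' \times \{ 1 \} )$ are closed under the \emph{infinitary} meets and joins of $\mathbff L$; granting that, everything else is a one-line computation with coordinatewise operations, and the three assertions follow at once. One also sees in passing that $\Gamma$ is properly contained in the intersection $\Phi$ of all maximal proper complete sublattices, since $( \omega ,0) \in \Phi $ while $\mathbff L \setminus \{ (0,1) \} $ witnesses $\Phi \subsetneq \mathbff L$.
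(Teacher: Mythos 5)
Your proof is correct, and while the overall skeleton (exhibit the sublattice $(\mathbb N \times \{1\})\cup\{(0,0)\}$ to witness that $(\omega,0)$ is a relative generator; analyse sublattices omitting $(\omega,0)$; compute $\Gamma$) matches the paper's, two of the three parts are argued by genuinely different means. For the computation of $\Gamma$ you exhibit, for each element $a \neq (0,0),(\omega,1)$, an explicit set $X$ (namely $L\setminus\{(0,1)\}$ or $N_n$) with $\langlee X, a\ranglee = L \neq \langlee X\ranglee$; the paper instead invokes the general fact that non-generators lie in every \emph{maximal} proper complete sublattice and asserts that $L\setminus\{(0,1)\}$ and $L\setminus\{(n,0),(n,1)\}$ are maximal (fully verified only in Remark \ref{pf}(c)) — your route has the advantage of not needing any maximality claim. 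For the statement that $(\omega,0)$ lies in every maximal proper complete sublattice, the paper has a shorter uniform argument: if $P$ is complete and omits $(\omega,0)$, then by completeness $P$ omits $(m,0)$ for all $m\geq n$ for some $n$, hence $P\subsetneq Q = L\setminus\{\,(m,0)\mid m>n\,\}$ with $Q$ a proper complete sublattice, so $P$ is not maximal. You instead classify complete sublattices as $(A\times\{0\})\cup(B\times\{1\})$ and split into the cases $B=K$ and $B\neq K$, the latter handled by the nice observation that $\langlee M,(\omega,0)\ranglee$ stays inside the proper complete sublattice $\big((A\cup B\cup\{\omega\})\times\{0\}\big)\cup(B\times\{1\})$. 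Your case analysis is more work but yields a complete structural picture of the sublattices omitting $(\omega,0)$; the paper's single-sentence cofinality argument is the more economical route and you could have used your $Q$-type sublattice from the $B=K$ case uniformly in both cases.
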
  

\begin{center}
\begin{tikzpicture}[-,>=stealth',auto,node distance=3cm,
thick,main node/.style={circle,scale=0.3,draw},
mnode/.style={scale=0.3}]
\node[main node,label={south:$(0,0)$}] (00){};
\node[main node,label={east:$(1,0)$}] (10) [above right of=00]{};
\node[main node,label={east:$(2,0)$}] (20) [above right of=10]{};
\node[main node,label={east:$(3,0)$}] (30) [above right of=20]{};
\node[mnode] (u0) [above right of=30]{};
\node[main node,label={east:$( \omega ,0)$}] (w0) [above right of=u0]{};
\draw (00) -- (10)-- (20)-- (30);
\draw (30) [dotted] -- (u0);
\node[main node,label={west:$(0,1)$}] (01) [above left of=00]{};
\draw (00)--(01);
\node[main node,label={west:$(1,1)$}] (11) [above left of=10]{};
\draw (10)--(11);
\node[main node,label={west:$(2,1)$}] (21) [above left of=20]{};
\draw (20)--(21);
\node[main node,label={west:$(3,1)$}] (31) [above left of=30]{};
\draw (30)--(31);
\node[mnode] (u1) [above right of=31]{};
\node[main node,label={north:$( \omega ,1)$}] (w1) [above right of=u1]{};
\draw (w0)--(w1);
\draw (01) -- (11)-- (21)-- (31);
\draw (31) [dotted] -- (u1);
\end{tikzpicture}
\smallskip

The lattice $L$ in Proposition \ref{latprop}.
\end{center}

\begin{proof} 
Observe that
$M= (\mathbb N \times \{ 1 \}) \cup \{ (0,0) \}   $
is  a complete proper sublattice of $\mathbff L$.
Here  we include the bottom element $(0,0)$
according to the convention that complete sublattices share
the minimum with their parent lattice.
Since $\langlee M, ( \omega , 0) \ranglee =L$,
then $( \omega , 0)$ is a relative generator. 

Now we check that $( \omega , 0) $ belongs to all the
 maximal proper complete sublattices of $\mathbff L$.
Suppose by contradiction that $P$ is a maximal complete sublattice
and $( \omega , 0) \notin P$. Then there is some $n \in \mathbb N$ 
such that $( m, 0) \notin P$, for every $m \geq n$, since 
$P$ is assumed to be complete. Now notice that  
$Q = L \setminus \{ \, (m,0) \mid  m \in K, m > n\, \} $ 
is a proper complete sublattice of $\mathbff L$.
Moreover, $P \subsetneq Q$, since
$(n,0) \in Q \setminus P$. Hence $P$ is not maximal and
we have obtained the desired contradiction. 
Notice that if we consider $\mathbff L$ as a finitary lattice, 
instead, then the subset
$L \setminus \{ ( \omega ,0) \} $ is a sublattice. However this subset
is not a complete sublattice,
since it is not closed with respect to the operation of taking infinitary joins.

Next, observe that  $L \setminus \{ (0,1)\} $, as well as  the subsets of the form 
$L \setminus \{ (n,0), (n,1) \} $, for $n \in \mathbb N$, $n \geq 1$,
are maximal proper complete sublattices.
Since any non-generator is contained in all the maximal proper
complete sublattices, then $\Gamma \subseteq \{  (0,0), ( \omega , 1)  \} $,
since we have already proved that $( \omega , 0) \notin \Gamma $. 
But $ (0,0) $ and $  ( \omega , 1)$ are obviously
non-generators, thus $\Gamma= \{  (0,0), ( \omega , 1)  \} $,
which  is a complete sublattice.
\end{proof} 

\begin{remark} \labbel{pf} 
 Some remarks about the proof of 
Proposition \ref{latprop} 
are in order.

(a)
First, the  proof shows that the intersection  $\Phi$    
of all the maximal proper
complete sublattices is $ \{  (0,0), ( \omega , 0), ( \omega , 1)  \}$,
since every non-generator belongs to $ \Phi$,
we have showed directly that $( \omega , 0) \in \Phi$
and, for every remaining element $\ell$  of $L$, 
we have exhibited a maximal proper
complete sublattice $P$ such that $\ell \notin P$. 
In particular, $\Phi \neq \Gamma $.

(b)
In the present lattice $L$ the bottom element $ (0,0) $ and
the top element $  ( \omega , 1)$ are 
non-generators even under the alternative convention
that sublattices need not respect minima and maxima.
Indeed, since, say, $ ( \omega , 1)$
is the top element of $L$, then
 $\langle X,  ( \omega , 1) \rangle = \langle X  \rangle \cup \{ ( \omega , 1)  \} $,
for every $X \subseteq L$.
Thus if   $\langle X,  ( \omega , 1) \rangle = L$,
 then $  (0,1),  ( \omega , 0) \in \langle X\rangle$,
hence  $\langle X \rangle = \langle X,  ( \omega , 1) \rangle $,
since $ (0,1) \vee  ( \omega , 0)  =( \omega , 1) $. 
This shows that $ ( \omega , 1)$ is a non generator in $L$
and a similar argument applies to $(0,0)$.
(On the other hand, for example, in a complete linearly ordered lattice
the bottom and the top elements are non-generators
if and only if we assume the convention that 
sublattices   respect minima and maxima.)

(c)
Finally, we show that all the  maximal proper complete sublattices
of $L$ are the ones described in the  third paragraph 
of the proof.
We show that if $P$ is a
maximal proper complete sublattice of $L$,
$n \in \mathbb N$ and $n \geq 1$, then  
$(n,0) \in P$ if and only if $(n,1) \in P$.

Suppose by contradiction that, say, 
$(n,0) \in P$ and $(n,1) \notin P$.
Then $(0,1), \dots, (n-1,1) \notin P$, since  
$(n,0) \in P$ and $(i,1) \vee (n,0)=(n,1)$, for $i <n$.
Hence if $Q= L \setminus \{ (0,1), \dots, (n,1) \} $,
then $P \subseteq Q$, but this contradicts the maximality
of $P$, since $Q$ is a complete sublattice 
which is not maximal. Indeed, since $n \geq 1$,
then $ L \setminus \{ (0,1), \dots, (n-1,1) \} $ is a proper complete 
sublattice of $L$ extending $Q$.
Symmetrically,
if $n \in \mathbb N$, $(n,1) \in P$ and  $(n,0) \notin P$,
then $P$ is contained in the sublattice
$L \setminus \{ \, (i,0) \mid n \leq i \leq \omega   \, \} $,
which is not maximal. 

Since we have showed that 
$(0,0) $, $ ( \omega , 0) $ and $  ( \omega , 1)$ 
belong to every maximal complete sublattice, then  the 
maximal complete sublattices of $L$ are exactly 
  $L \setminus \{ (0,1)\} $ and  
$L \setminus \{ (n,0), (n,1) \} $, for  $n \geq 1$.
\end{remark}

The next result needs a bit more work and shows that 
in a distributive complete lattice the set of the non-generators 
might even fail to be a complete sublattice.
We shall denote by 
$\mathbff L \ltimes \mathbff M$
the lexicographic product of two linearly ordered sets. 
Elements of $\mathbff L \ltimes \mathbff M$ shall be denoted by 
$ a \ltimes b$, for $a \in L$ and $b \in M$.
In the next theorem we shall consider an extension of
$(\mathbb N \ltimes\mathbb N) \times \{ 0,1 \}$,
where $\times $ is  the standard product of lattices.
Elements of   $(\mathbb N \ltimes\mathbb N) \times \{ 0,1 \}$
shall be denoted by
$(m \ltimes n, 0)$ and
$(m \ltimes n, 1)$.

\begin{theorem} \labbel{lat}
 Let $\mathbff K$  be the linearly ordered set
obtained from the lexicographic 
product $\mathbb N \ltimes \mathbb N$ by adding a top element 
$ \omega^2$. Let $\mathbff L$ be the lattice product
$\mathbff K \times \{ 0,1 \} $. 

In the complete lattice $\mathbff L$
the element $( \omega^2 , 0) $ is a relative generator,
but $( \omega^2 , 0) $ belongs to 
the complete sublattice  generated 
by the set  $\Gamma$ of the non-generators of $\mathbff L$.
In particular, $\Gamma$ is not a complete sublattice of $\mathbff L$.  
 \end{theorem}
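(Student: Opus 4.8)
The plan is to argue in three steps, in the spirit of the proof of Proposition \ref{latprop} but with an extra layer of ``limit'' elements. \emph{Step 1: $(\omega^2,0)$ is a relative generator.} Put $M=(K\times\{1\})\cup\{(0\ltimes 0,0)\}$. Since $\mathbf K$ is complete, $M$ is closed under arbitrary meets and joins, it contains the top $(\omega^2,1)$ and the bottom $(0\ltimes 0,0)$ of $\mathbf L$, and it is proper (for instance $(1\ltimes 0,0)\notin M$). Moreover $\langlee M,(\omega^2,0)\ranglee=L$, because $(\omega^2,0)\wedge(j,1)=(j,0)$ for every $j\in K$, so that $K\times\{0\}\subseteq\langlee M,(\omega^2,0)\ranglee$. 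Hence $(\omega^2,0)$ is a relative generator, exactly as in Proposition \ref{latprop}. Since $(\omega^2,0)=\bigvee_{m\geq 1}(m\ltimes 0,0)$, it will then suffice to prove that each $(m\ltimes 0,0)$ with $m\geq 1$ is a non-generator: this gives $(m\ltimes 0,0)\in\Gamma$ for all such $m$, hence $(\omega^2,0)\in\langlee\Gamma\ranglee$, while $(\omega^2,0)\notin\Gamma$ since a relative generator is not a non-generator; thus $\Gamma\subsetneq\langlee\Gamma\ranglee$ and $\Gamma$ is not a complete sublattice.

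\emph{Step 2: the key identity.} Fix $m\geq 1$ and set $a=(m\ltimes 0,0)$; note that $m\ltimes 0$ is a limit point of $\mathbf K$ from below (it has no immediate predecessor, being $\bigvee_{l}(m-1)\ltimes l$). Unwinding the definition of non-generator, and using $\langlee X,a\ranglee=\langlee\langlee X\ranglee,a\ranglee$, it is enough to show that $\langlee N,a\ranglee\neq L$ for every proper complete sublattice $N$ of $\mathbf L$ with $a\notin N$. Because $\mathbf L$, a product of complete chains, satisfies the infinite distributive laws $x\wedge\bigvee_i y_i=\bigvee_i(x\wedge y_i)$ and $x\vee\bigwedge_i y_i=\bigwedge_i(x\vee y_i)$, the set $\{\,(a\wedge b)\vee c\mid b,c\in N\,\}$ is already a complete sublattice containing $N\cup\{a\}$, and hence
\[
\langlee N,a\ranglee=\{\,(a\wedge b)\vee c\mid b,c\in N\,\}.
\]
Writing elements of $\mathbf L$ as pairs and computing in the chain $\mathbf K$, one reads off two facts: for $j>m\ltimes 0$ one has $(j,\varepsilon)\in\langlee N,a\ranglee$ iff $(j,\varepsilon)\in N$; and $(m\ltimes 0,1)\in\langlee N,a\ranglee$ iff $N$ contains some $(j,1)$ with $j\leq m\ltimes 0$.

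\emph{Step 3: deriving a contradiction.} Suppose $\langlee N,a\ranglee=L$. The first fact forces $\{(j,0)\mid j>m\ltimes 0\}\subseteq N$; the second forces $N$ to contain some $(j_1,1)$ with $j_1\leq m\ltimes 0$, and in fact $j_1<m\ltimes 0$, since $(m\ltimes 0,1)\in N$ together with $(m\ltimes 1,0)\in N$ would give $a=(m\ltimes 0,1)\wedge(m\ltimes 1,0)\in N$. Let $j^*=\bigvee\{\,j<m\ltimes 0\mid (j,1)\in N\,\}$. Then $j^*<m\ltimes 0$: otherwise this set is cofinal below the limit point $m\ltimes 0$, so its join in $\mathbf L$ equals $(m\ltimes 0,1)\in N$, a contradiction. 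Pick $j_2$ with $j^*<j_2<m\ltimes 0$ (possible since $m\ltimes 0$ is a limit point). Feeding $(j_2,1)$ into the displayed identity and using $j^*<j_2$, one sees that $(j_2,1)\in\langlee N,a\ranglee$ forces $(j_2,0)\in N$. As $j_2$ ranges over the interval $(j^*,m\ltimes 0)$, which is cofinal below $m\ltimes 0$, this gives $a=\bigvee\{\,(j_2,0)\mid j^*<j_2<m\ltimes 0\,\}\in N$, contradicting $a\notin N$. Hence $\langlee N,a\ranglee\neq L$, so $(m\ltimes 0,0)\in\Gamma$, and by Step 1 the theorem follows.

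\emph{The main obstacle} is Steps 2--3: controlling $\langlee N,a\ranglee$ for a completely arbitrary proper complete sublattice $N$ avoiding $a$. The identity $\langlee N,a\ranglee=\{(a\wedge b)\vee c\mid b,c\in N\}$ reduces this to arithmetic in the chain $\mathbf K$ and the two-element factor, but the delicate point is to locate a single element---a $1$-layer element $(j_2,1)$ just below the limit $m\ltimes 0$---whose presence in $\langlee N,a\ranglee$ would force $N$ to recapture $a$ as a cofinal join of $0$-layer elements. The remaining verifications are routine computations with the lexicographic order on $\mathbb N\ltimes\mathbb N$.
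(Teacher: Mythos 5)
Your argument is correct in outline and reaches the stated conclusion, but it takes a genuinely different route from the paper's. The paper never computes $\langle N,a\rangle$ in closed form: to show that $(n\ltimes 0,0)$ is a non-generator it works directly with an arbitrary generating set $X$, repeatedly using complements of explicit complete sublattices (namely $L\setminus\{(j,0),(j,1)\}$ and $L\setminus\{(k,0)\mid k\geq n\ltimes 1\}$) to force specific elements into $X$, then meets them against $(n\ltimes 1,0)$ and takes the cofinal join $\bigvee_{m\geq 1}(n{-}1\ltimes m,0)=(n\ltimes 0,0)$. You instead reduce to a proper complete sublattice $N=\langle X\rangle$ avoiding $a$ and propose the normal form $\langle N,a\rangle=\{(a\wedge b)\vee c\mid b,c\in N\}$, from which your two ``facts'' and the contradiction of Step 3 follow by order arithmetic in the chain $\mathbf K$ (I checked these: the first coordinate of $(a\wedge b)\vee c$ is $\max(\min(k_a,k_b),k_c)$, and Facts 1 and 2, the bound $j^*<m\ltimes 0$, and the deduction $(j_2,0)\in N$ all go through, using that the top and bottom of $L$ lie in every complete sublattice). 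Your method is more systematic and exposes exactly which sublattices can absorb $a$; the paper's is more elementary, needing only a handful of hand-picked sublattices rather than a structure theorem for $\langle N,a\rangle$.

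One step of yours needs repair. The two infinite distributive laws $x\wedge\bigvee_i y_i=\bigvee_i(x\wedge y_i)$ and $x\vee\bigwedge_i y_i=\bigwedge_i(x\vee y_i)$ do not by themselves yield closure of $\{(a\wedge b)\vee c\mid b,c\in N\}$ under arbitrary meets: in $\bigwedge_i\bigl((a\wedge b_i)\vee c_i\bigr)$ both joinands vary with $i$, and neither law applies (both laws hold in any complete Boolean algebra, where complete distributivity may nevertheless fail, so no such rewriting is available in general). What saves you is that $\mathbf L$ is a product of complete chains and hence completely distributive: expanding $\bigwedge_i\bigl((a\wedge b_i)\vee c_i\bigr)$ by complete distributivity gives a join of terms each of the form $a\wedge d$ or $e$ with $d,e\in N$, and the join-infinite law then collapses this back to $(a\wedge b')\vee c'$ with $b',c'\in N$. (Alternatively, a direct case analysis on the first coordinates $\max(\min(k_a,k_{b_i}),k_{c_i})$ verifies meet-closure by hand.) With that justification supplied, your proof is complete.
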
 

\begin{center}
\begin{tikzpicture}[-,>=stealth',auto,node distance=3cm,
thick,main node/.style={circle,scale=0.2,draw},
mnode/.style={scale=0.2}]
\node[main node,label={south:$(0 \ltimes 0,0)$}] (00){};
\node[main node,label={east:$(0 \ltimes 1,0)$}] (10) [above right of=00]{};
\node[main node,label={east:$(0 \ltimes 2,0)$}] (20) [above right of=10]{};
\node[main node,label={east:$(0 \ltimes 3,0)$}] (30) [above right of=20]{};
\node[mnode] (u0) [above right of=30]{};
\draw (00) -- (10)-- (20)-- (30);
\draw (30) [dotted] -- (u0);
\node[main node,label={west:$(0 \ltimes 0,1)$}] (01) [above left of=00]{};
\draw (00)--(01);
\node[main node,label={west:$(0 \ltimes 1,1)$}] (11) [above left of=10]{};
\draw (10)--(11);
\node[main node,label={west:$(0 \ltimes 2,1)$}] (21) [above left of=20]{};
\draw (20)--(21);
\node[main node,label={west:$(0 \ltimes 3,1)$}] (31) [above left of=30]{};
\draw (30)--(31);
\node[mnode] (u1) [above right of=31]{};
\draw (01) -- (11)-- (21)-- (31);
\draw (31) [dotted] -- (u1);
\draw (00)--(01);
\node[main node,label={east:$( 1 \ltimes 0,0)$}] (w0a) [above right of=u0]{};
\node[main node,label={west:$( 1 \ltimes 0 ,1)$}] (w1a) [above right of=u1]{};
\draw (w0a)--(w1a);
\node[main node,label={east:$(1 \ltimes 1,0)$}] (10a) [above right of=w0a]{};
\node[main node,label={east:$(1 \ltimes 2,0)$}] (20a) [above right of=10a]{};
\node[mnode] (u0a) [above right of=20a]{};
\draw (w0a) -- (10a)-- (20a);
\draw (20a) [dotted] -- (u0a);
\draw (w0a)--(w1a);
\node[main node,label={west:$(1 \ltimes 1,1)$}] (11a) [above right of=w1a]{};
\draw (10a)--(11a);
\node[main node,label={west:$(1 \ltimes 2,1)$}] (21a) [above left of=20a]{};
\draw (20a)--(21a);
\node[mnode] (u1a) [above right of=21a]{};
\draw (w1a) -- (11a)-- (21a);
\draw (21a) [dotted] -- (u1a);
\node[main node,label={east:$( 2 \ltimes 0,0)$}] (w0b) [above right of=u0a]{};
\node[main node,label={west:$( 2 \ltimes 0 ,1)$}] (w1b) [above right of=u1a]{};
\draw (w0b)--(w1b);
\node[main node,label={east:$(2 \ltimes 1,0)$}] (10b) [above right of=w0b]{};
\node[main node,label={east:$(2 \ltimes 2,0)$}] (20b) [above right of=10b]{};
\node[mnode] (u0b) [above right of=20b]{};
\draw (w0b) -- (10b)-- (20b);
\draw (20b) [dotted] -- (u0b);
\draw (w0b)--(w1b);
\node[main node,label={west:$(2 \ltimes 1,1)$}] (11b) [above right of=w1b]{};
\draw (10b)--(11b);
\node[main node,label={west:$(2 \ltimes 2,1)$}] (21b) [above left of=20b]{};
\draw (20b)--(21b);
\node[mnode] (u1b) [above right of=21b]{};
\draw (w1b) -- (11b)-- (21b);
\draw (21b) [dotted] -- (u1b);
\node[mnode] (u0c) [above right of=u0b]{};
\node[mnode] (u1c) [above right of=u1b]{};
\node[mnode] (u0d) [above right of=u0c]{};
\node[mnode] (u1d) [above right of=u1c]{};
\draw (u0c) [dotted] -- (u0d);
\draw (u1c) [dotted] -- (u1d);
\node[main node,label={east:$(\omega^2 ,0)$}] (w0e) [above right of=u0d]{};
\node[main node,label={north:$(\omega^2 ,1)$}] (w1e) [above right of=u1d]{};
\draw (w0e)--(w1e);
\end{tikzpicture}
\smallskip

The lattice $L$ in Theorem  \ref{lat}.
\end{center}

\begin{proof}
As in  the proof of Proposition \ref{latprop},
$ M = (K \times \{ 1 \}) \cup \{ (0,0) \}  $ is a proper complete sublattice of 
$\mathbff L$  and 
$\langlee M , ( \omega^2 , 0) \ranglee = L$,
hence $( \omega^2 , 0)$ is a relative generator.

We now show  that, for every $n \in \mathbb N$, $n \geq 1$, 
the element  $(n \ltimes 0, 0)$
is a non-generator. 
Suppose that $n \geq 1$ and 
$\langlee X  ,  (n \ltimes 0, 0) \ranglee  = L$.
We first claim that  
$ (n \ltimes 1, 0) \in \langlee X \ranglee $. 
Indeed, since $L \setminus \{ (n \ltimes 1, 0), (n \ltimes 1, 1) \} $ 
is a complete sublattice of $L$, then either
$(n \ltimes 1, 0) \in X $, or $(n \ltimes 1, 1)  \in X$.
If the first eventuality occurs, we are done.
Otherwise, let $P=  \{ \, (k, 0)  \mid 
k \in K \text{ and }  \ k \geq n \ltimes 1 \text{  in } K  \, \} $.
Since $L \setminus P$ is a complete sublattice
of $L$ and $\langlee X  ,  (n \ltimes 0, 0) \ranglee  = L$, then   
$(k, 0) \in X$, for some $k  \geq n \ltimes 1$.  
From $(n \ltimes 1, 1)  \in X$ and
$(k, 0) \in X$ we get
$(n \ltimes 1, 1) \wedge (k, 0) = (n \ltimes 1, 0)
\in \langlee X   \ranglee $.  
The claim that $ (n \ltimes 1, 0) \in \langlee X \ranglee $
has been proved.

Now let $m \in \mathbb N$, $m \geq 1$.
As above, since $L \setminus \{ (n{-}1  \ltimes m, 0),
(n{-}1  \ltimes m, 1) \} $ 
is a complete sublattice of $L$, then either
$(n{-}1  \ltimes m, 0) \in X $, or $(n{-}1  \ltimes m, 1)  \in X$.
Since $ (n \ltimes 1, 0) \in \langlee X \ranglee $
and $ (n{-}1  \ltimes m, 1)  \wedge (n \ltimes 1, 0) = (n{-}1  \ltimes m, 0) $,
we get  $(n{-}1  \ltimes m, 0) \in \langlee X \ranglee$
in each case. Since $m \geq 1$ was arbitrary in the above argument  
 and $\langlee X \ranglee$ is a complete sublattice of $L$, then
$\bigvee _{ m \geq 1 } (n{-}1  \ltimes m, 0) 
= (n  \ltimes 0, 0) \in \langlee X \ranglee$, thus
$\langlee X \ranglee = L $ follows from 
$\langlee X  ,  (n \ltimes 0, 0) \ranglee  = L$.

We have proved that 
the elements of the form
$(n \ltimes 0, 0)$, $n \geq 1$, 
are non-generators. Since
$( \omega^2 , 0)  $
is a relative generator
and  $ ( \omega^2 , 0)  = \bigvee _{ n \in \mathbb N} (n \ltimes 0, 0)$, 
then the set of all the non-generators
is not a complete sublattice of $\mathbff L$. 

The proof is complete. We just point out that,
arguing as above,
 we get 
\begin{equation*}    
\Gamma= \{ \, (n \ltimes 0, 0)  \mid  n \in \mathbb N \, \}
\cup \{ \, (n \ltimes 0, 1)  \mid  n \in \mathbb N, \ n \geq 1 \, \}
\cup \{ ( \omega ^2,1)  \}  
  \end{equation*} 
and $\Phi= \langle \Gamma \rangle = \Gamma \cup  \{ ( \omega ^2,0)  \} $.
Using the arguments in Remark \ref{pf}(c),
the  maximal proper complete sublattices
of $L$ are 
$L \setminus \{ (0 \ltimes 0, 1)$ and
$L \setminus \{ (n \ltimes m, 0),
(n  \ltimes m, 1) \} $, for $n,m \in \mathbb N$ 
and $m \geq 1$.
\end{proof}

The complete lattice $L$ in Theorem \ref{lat}
is countable. The example can be somewhat simplified 
if we allow lattices of larger cardinalities.
 In the following proposition $[0,1]$
is the closed interval of real  numbers between
$0$ and  $1$, with the lattice operations
of $\sup$ and $\inf$.  

\begin{proposition} \labbel{01}
In the complete lattice $L=[0,1] \times \{  0,1\} $
all the elements are non-generators, except for
$(0,1)$ and $(1,0)$.
Hence the set $\Gamma$   of the non-generators
is not a complete sublattice. The set $\Gamma$ 
generates the whole of $[0,1] \times \{  0,1\} $.
 \end{proposition}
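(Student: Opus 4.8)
The plan is to adapt the arguments of Proposition~\ref{latprop} and Theorem~\ref{lat}, relying on two elementary facts: the complete sublattices of $[0,1]$ are precisely its topologically closed subsets containing $0$ and $1$ (so in particular any closed interval, or finite union of closed intervals, containing $0$ and $1$ is one), and a product of complete sublattices of the factors is a complete sublattice of the product. As usual, for the element $a$ under examination it suffices to show that $a\in\langlee X\ranglee$ whenever $\langlee X,a\ranglee=L$.

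First I would handle the two claimed exceptions together with the extremal elements. Exactly as in the earlier proofs, $M=([0,1]\times\{1\})\cup\{(0,0)\}$ is a proper complete sublattice of $L$ (a copy of $[0,1]$ with a new bottom, sharing $(0,0)$ and $(1,1)$ with $L$); since $(1,0)\wedge(z,1)=(z,0)$ for every $z\in[0,1]$, we get $\langlee M,(1,0)\ranglee=L$, so $(1,0)$ is a relative generator. Dually, $N=([0,1]\times\{0\})\cup\{(1,1)\}$ is a proper complete sublattice and $(0,1)\vee(z,0)=(z,1)$ gives $\langlee N,(0,1)\ranglee=L$, so $(0,1)$ is a relative generator. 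The bottom $(0,0)$ and the top $(1,1)$ are non-generators, by the convention on minima and maxima (or directly, as in Remark~\ref{pf}(b), using $(0,1)\vee(1,0)=(1,1)$ and $(0,1)\wedge(1,0)=(0,0)$).

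The core of the argument is to prove that $(x,0)$ is a non-generator for every $x$ with $0<x<1$; the case of $(x,1)$ is dual. So assume $\langlee X,(x,0)\ranglee=L$. For $x<y\le 1$ the set $U_y=\bigl([0,x]\cup[y,1]\bigr)\times\{0,1\}$ is a complete sublattice (a product of complete sublattices of $[0,1]$ and $\{0,1\}$) with $(x,0)\in U_y$ but $U_y\neq L$, since it omits $\bigl((x+y)/2,0\bigr)$; as $\langlee X,(x,0)\ranglee=L$ is not contained in $U_y$ while $(x,0)\in U_y$, we must have $X\not\subseteq U_y$, so $X$ contains an element whose first coordinate lies strictly between $x$ and $y$. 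Letting $y$ range over $x+1/n$, $n\ge 1$, we obtain $(z_n,i_n)\in X$ with $x<z_n<x+1/n$, whence $\bigwedge_n(z_n,i_n)=(x,\varepsilon)\in\langlee X\ranglee$ for some $\varepsilon\in\{0,1\}$ (the infimum of the first coordinates being $x$). Similarly, $T=\{(z,0):z\le x\}\cup([0,1]\times\{1\})$ is a complete sublattice with $(x,0)\in T$ and $(1,0)\notin T$, so $X\not\subseteq T$, and $X$ contains some $(z_0,0)$ with $z_0>x$. Then $(x,\varepsilon)\wedge(z_0,0)=(x,0)\in\langlee X\ranglee$, hence $\langlee X\ranglee=\langlee X,(x,0)\ranglee=L$. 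The dual argument — replacing $U_y$ by $\bigl([0,y]\cup[x,1]\bigr)\times\{0,1\}$ for $0\le y<x$, replacing $T$ by $\{(z,1):z\ge x\}\cup([0,1]\times\{0\})$, and infima by suprema — shows that $(x,1)$ is a non-generator for every $x$ with $0<x<1$.

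Collecting these facts, $\Gamma=L\setminus\{(0,1),(1,0)\}$. This $\Gamma$ is not a complete sublattice: $(1/n,1)\in\Gamma$ for all $n\ge 1$, yet $\bigwedge_{n\ge 1}(1/n,1)=(0,1)\notin\Gamma$, so $\Gamma$ is not closed under infinitary meets. Finally $(0,1)=\bigwedge_{n\ge 1}(1/n,1)$ and $(1,0)=\bigvee_{n\ge 1}(1-1/n,0)$ both lie in $\langlee\Gamma\ranglee$, so $\langlee\Gamma\ranglee=L$. I expect the only delicate step to be the third paragraph: the auxiliary sets must be chosen so as to be genuine complete sublattices of the \emph{product} $L$ — the naive candidates that leave one whole level untouched fail to be closed under meets or joins, and the remedy is to cut along the first coordinate — and one must verify that deleting $(x,0)$ from a generating set forces $X$ to accumulate at $x$ on the correct side. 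The remaining verifications (closure of $M$, $N$, $U_y$, $T$ and their duals, and the displayed meets and joins) are routine.
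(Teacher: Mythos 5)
Your proof is correct and follows essentially the same route as the paper's sketch: your auxiliary complete sublattices $T$ and $U_y$ are respectively the paper's $L\setminus\{\,(s,0)\mid r<s\,\}$ and the mirror image of $L\setminus\big(]t,r[\,\times\{0,1\}\big)$, and the limiting argument is the same, except that you approach $x$ from above via an infinitary meet where the paper approaches it from below via a join. The details you supply beyond the paper's sketch (closure of the auxiliary sets, the four extremal elements, and the final computation of $\Gamma$) are all correct.
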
  
 
\begin{proof} (Sketch)
Let $0 < r < 1$
and suppose that $\langlee X, (r,0) \ranglee = L$.
Since $L \setminus \{ \, (s,0) \mid r < s \, \} $
is a complete sublattice, there is $s>r$
such that $(s,0) \in X$.      
For every $t$ with
$0 \leq t  <r$, the set
 $ L \setminus \big(]t,r[ \times \{ 0,1 \} \big ) $
is a complete sublattice,  hence
$(v,a) \in X$, for some $a \in \{ 0, 1 \} $
and $v$ with $t < v <r$. Taking the meet
with     $(s,0)$, we get  
$(v,0) \in \langlee X \ranglee $.
Letting $t$ approximate 
$r$ from below, we get a sequence 
of elements  $(v_n,0) \in \langlee X \ranglee $
whose join is $(r,0)$, hence
 $(r,0) \in \langlee X \ranglee $,
thus $ \langlee X \ranglee=L $.
 
All the rest is symmetrical or 
similar to the proof of Theorem \ref{lat}. 
\end{proof}
 
As we have just showed, the set $\Gamma$ of the non-generators
of a complete lattice is not necessarily a complete sublattice. 
However, $\Gamma$ is always a sublattice, that is, $\Gamma$
is closed under finite meets and joins. Indeed, if $a$ and  $b$
are non-generators and, say,   $\langlee X, a \wedge b \ranglee = L $,
then  $\langlee X, a , b \ranglee = L $, since 
 $\langlee X, a \wedge b \ranglee  \subseteq \langlee X, a ,  b \ranglee  $.
Hence  $\langlee X, a \ranglee = L $, since $b$ is a non-generator, thus 
 $\langlee X \ranglee = L $, since $a$ is a non-generator. This shows 
that $a \wedge b$ is a non-generator. 

\begin{remark} \labbel{card}
(a) As we mentioned, we have dealt with complete lattices and semilattices
just for the sake of simplicity.
Since the counterexample in Theorem \ref{lat}
is countable, completeness is the same as countable completeness.
Hence even in a countably complete lattice it might happen that 
 the set of all the non-generators fails to be  a substructure. 
Recall that \emph{countably complete}
means that every countable subset has a meet and a join. 

(b)
Similarly, the version of Proposition \ref{seml}
 holds with the same proof when ``complete'' is everywhere replaced 
by ``countably complete'' or, more generally,
by ``${<}\kappa$-complete'', for $\kappa$ an infinite cardinal,
where the latter notion means that every subset of cardinality
$ < \kappa $ has a meet.   
 Of course, the notion of meet-reducibility in the proof
should be replaced by an appropriate notion
of ${<}\kappa$-meet reducibility.

The versions of Proposition \ref{seml}
 hold also under the convention under which
``completeness'' does not include the possibility of
taking the meet of the empty set, that is, for semilattices
not assumed to have a maximum.   

(c) Notice that the notion of a non-generator in a 
${<}\kappa$-complete semilattice
depends on $\kappa$.
For example, consider a descending countably infinite sequence
with a further element $d$ added at the bottom. 
The element $d$ is indispensable 
if the above  semilattice is considered as finitary; however, $d$
is a non-generator if we consider the semilattice as 
countably complete. 

Similarly, if $d$ is added at the bottom
of a descending chain of cofinality $\lambda$,
then $d$
is a non-generator if and only if  we consider the semilattice as 
${<}\kappa$-complete, for $\kappa> \lambda $.

(d) In the proof of Theorem \ref{lat} we have only used binary meets
and (countable) infinitary joins, hence the theorem holds 
in the context of countably-join-complete lattices.
\end{remark}

\begin{corollary} \labbel{unasola}
There is an algebraic structure
with a single operation depending on countably many arguments
and such that the set of all the non-generators
fails to be a substructure.
 \end{corollary}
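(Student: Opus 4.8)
The plan is to start from the countable complete lattice $L$ of Theorem \ref{lat} and to replace its lattice operations by a single operation of countably infinite arity inducing the same ``generated substructure'' operator. By Remark \ref{card}(d), the whole argument of Theorem \ref{lat} uses only binary meets and countable joins; so, regarding $L$ as a structure $(L,\wedge,\bigvee_\omega)$ — with $\wedge$ the binary meet and $\bigvee_\omega$ the operation sending a countable sequence of elements to its join (everywhere defined, since $L$ is complete) — the set of non-generators of $(L,\wedge,\bigvee_\omega)$ is still not a substructure. Since the notions of \emph{non-generator} and of \emph{substructure} depend only on the underlying set and on the operator $X\mapsto\langle X\rangle$, it suffices to produce a single $\mathbb N$-ary operation $f$ on $L$ with $\langle X\rangle_f=\langle X\rangle_{(L,\wedge,\bigvee_\omega)}$ for every $X\subseteq L$.

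The operation I would use is
\[
 f(x_0,x_1,x_2,x_3,\dots )\ =\ \bigvee_{i\in\mathbb N}\bigl(x_{2i}\wedge x_{2i+1}\bigr),
\]
which is well defined since $L$ is (countably-join-)complete. The key step is to check that a subset $S\subseteq L$ is closed under $f$ if and only if it is closed under binary meets and under countable joins. The ``if'' direction is immediate from the shape of $f$. For ``only if'', assume $S$ is $f$-closed: for $a,b\in S$, evaluating $f$ at the sequence $a,b,a,b,a,b,\dots$ gives $\bigvee_{i}(a\wedge b)=a\wedge b\in S$; and for $a_0,a_1,a_2,\dots\in S$, evaluating $f$ at $a_0,a_0,a_1,a_1,a_2,a_2,\dots$ gives $\bigvee_i(a_i\wedge a_i)=\bigvee_i a_i\in S$. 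Hence the $f$-closed subsets of $L$ are exactly the $(\wedge,\bigvee_\omega)$-closed ones, so the two closure operators — and therefore the two sets of non-generators — coincide. Together with the previous paragraph, this yields the structure $(L,f)$ required by the corollary (and, $L$ being countable, so is $(L,f)$).

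I do not expect a serious obstacle; the only delicate point is precisely the equivalence just described, namely that one suitably chosen $\mathbb N$-ary operation can simultaneously recover both the binary meet and the countable join. This is why $f$ is taken of the ``paired-coordinate'' form $\bigvee_i(x_{2i}\wedge x_{2i+1})$: the substitution $x_{2i}=a$, $x_{2i+1}=b$ extracts $a\wedge b$ and the substitution $x_{2i}=x_{2i+1}=a_i$ extracts $\bigvee_i a_i$, whereas more naive combinations such as $(x_0\wedge x_1)\vee\bigvee_{n\ge 2}x_n$ need not isolate the meet within the generated substructure.
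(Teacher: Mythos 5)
Your proposal is correct and coincides with the paper's own proof: the paper uses exactly the same operation $f(x_0,x_1,\dots)=\bigvee_{i\in\mathbb N}(x_{2i}\wedge x_{2i+1})$ on the lattice of Theorem \ref{lat}, invoking Remark \ref{card}(d) and the observation that binary meets and countable joins are expressible via $f$. Your explicit substitutions $a,b,a,b,\dots$ and $a_0,a_0,a_1,a_1,\dots$ just spell out the detail the paper leaves to the reader.
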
 

\begin{proof}
Consider the example from 
Theorem \ref{lat} with the single infinitary operation
 $f(x_0, x_1, x_2, \dots)  =  \bigvee _{ i \in \mathbb N}  ( x_{2i} \wedge x_{2i+1})  $,
noticing that  finite meets and countable joins
can be expressed in function of $f$. Then 
use remark  (d) above.
 \end{proof}

Acknowledgement. 
We thank an anonymous referee for many useful comments which helped 
improve the paper.

\end{document}